\documentclass{elsarticle}%
\usepackage{amsmath}%
\usepackage{amsfonts}%
\usepackage{amssymb}%
\usepackage{amsthm}
\usepackage{mathrsfs}
\usepackage{array}
\usepackage{tikz}
\usetikzlibrary{fit}
\usepackage{adjustbox}
\usepackage[left=1.5in, right=1.5in]{geometry}

\newcommand{\Q}{\mathbb{Q}}
\newcommand{\C}{\mathbb{C}}
\newcommand{\Z}{\mathbb{Z}}

\newcommand{\mf}{\mathfrak}

\newcommand{\Syl}{\text{Syl}}

\renewcommand{\ker}{\text{\normalfont{ker}}}
\newcommand{\coker}{\text{\normalfont{coker}}}

\newcommand{\diag}{\text{\normalfont{diag}}}

\renewcommand{\bar}{\overline}
\renewcommand{\tilde}{\widetilde}

\newtheorem{theorem}{Theorem}
	
\newtheorem{prop}[theorem]{Proposition}
\newtheorem{cor}[theorem]{Corollary}
\newtheorem*{thm3}{Theorem \ref{Lorenzini applied}}
\newtheorem*{thm5}{Theorem \ref{reflection rep}}
\theoremstyle{definition}
\newtheorem{definition}[theorem]{Definition}
\newtheorem{example}[theorem]{Example}

\theoremstyle{remark}
\newtheorem*{remark}{Remark}

\begin{document}
\title{Critical groups of group representations}
\author[umn]{Christian Gaetz \fnref{fn1}}
\ead{gaetz@mit.edu}
\fntext[fn1]{Present address: Department of Mathematics, Massachusetts Institute of Technology, Cambridge, MA 02139.}

\address[umn]{School of Mathematics, University of Minnesota, Minneapolis, MN 55455.}

\date{\today}

\begin{abstract}
This paper investigates the critical group of a faithful representation of a finite group.  It computes the order of the critical group in terms of the character values, and gives some restrictions on its subgroup structure.  It also computes the precise structure of the critical group both for the regular representation of any finite group, and for the reflection representation of $\mf{S}_n$. 
\end{abstract}

\maketitle

\noindent \textbf{Keywords:} critical group, sandpile group, chip firing, group representation, abelian sandpile model. \\
\noindent \textbf{AMS classification:} 05E10, 15B36, 20C15.

\section{Introduction} \label{intro}  
Benkart, Klivans, and Reiner recently defined a new invariant of a faithful representation $\gamma$ of a finite group called its \textit{critical group}, denoted $K(\gamma)$, a finite abelian group \cite{BKR}.  It was motivated by, and in some cases coincides with, the critical group of a directed graph.  For a brief introduction to the directed graph case see \cite{LP}, where critical groups are called \textit{sandpile groups}, and for an example of when these ideas coincide see Example \ref{coincide} or \cite[Section 6.3]{BKR}.

Let $G$ be a finite group with irreducible complex characters $\chi_0,...,\chi_{\ell}$.  We will always let $\chi_0$ denote the character of the trivial representation.  Let $\gamma$ be a faithful (not necessarily irreducible) $n$-dimensional representation of $G$ with character $\chi_{\gamma}$.  Let $M \in \Z^{(\ell +1) \times (\ell+1)}$ be the matrix whose entries are defined by the equations
\[
\chi_{\gamma} \cdot \chi_i = \sum_{j=0}^{\ell} m_{ij} \chi_j
\]
\begin{definition}
Let $\gamma$ be an $n$-dimensional faithful complex representation of a finite group $G$, and let $M$ be defined as above.  The \textit{extended McKay-Cartan matrix} is $\tilde{C}:=nI-M$ and the \textit{McKay-Cartan matrix} $C$ is the submatrix of $\tilde{C}$ obtained by removing the row and column corresponding to the trivial character $\chi_0$.
\end{definition}

\begin{definition}\cite[Definition 5.11]{BKR}
Given a faithful complex representation $\gamma$ of a finite group $G$ with McKay-Cartan and extended McKay-Cartan matrices $C$ and $\tilde{C}$, we define its \textit{critical group} in either of the following equivalent ways:
\begin{align*}
K(\gamma)&:= \coker(C^t)=K(C) \\
\Z \oplus K(\gamma) &:= \coker(\tilde{C}^t)
\end{align*}
\end{definition}
 
Section \ref{structure} applies some results of Lorenzini in \cite{Lor} to obtain the first main Theorem:
\begin{theorem} \label{Lorenzini applied}
Let $G$ be a finite group with faithful complex $n$-dimensional representation $\gamma$ and critical group $K(\gamma)$.  Let $e=c_0,c_1,...,c_{\ell}$ be a set of conjugacy class representatives for $G$, then:
\begin{enumerate}
\item[(i)] 
\[
\frac{1}{|G|} \prod_{i=1}^{\ell} (n-\chi_{\gamma}(c_i))=|K(\gamma)|
\]

\item[(ii)]  If $\chi_{\gamma}$ is real-valued, and $\chi_{\gamma}(c)$ is an integer character value achieved by $m$ different conjugacy classes, then $K(\gamma)$ contains a subgroup isomorphic to \\ $(\Z/(n-\chi_{\gamma}(c)) \Z)^{m-1}$.
\end{enumerate}
\end{theorem}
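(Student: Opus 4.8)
The plan is to read both parts off the spectrum of $\tilde C$ and then apply results of Lorenzini from \cite{Lor}. The key spectral fact is that $M$ (equivalently $M^t$) is the matrix, in the basis $\chi_0,\dots,\chi_\ell$, of the operator ``multiplication by $\chi_\gamma$'' on the complexified character ring; in the basis of primitive idempotents this operator is diagonal with entries $\chi_\gamma(c_0),\dots,\chi_\gamma(c_\ell)$, so the eigenvalues of $\tilde C=nI-M$ are $n-\chi_\gamma(c_0),\dots,n-\chi_\gamma(c_\ell)$, and (by diagonalizability of $M$) a value $a$ occurs with multiplicity equal to the number of conjugacy classes on which $\chi_\gamma$ takes the value $a$. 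Since $c_0=e$ the eigenvalue $0$ occurs; since $\gamma$ is faithful $|\chi_\gamma(c_k)|<n$ for $k\ge 1$ (equality would force $\gamma(c_k)$ scalar, hence trivial), so $0$ is simple and the number $d:=n-\chi_\gamma(c)$ of part (ii) is nonzero. Finally, evaluating $\chi_\gamma\chi_i=\sum_j m_{ij}\chi_j$ at $e$ shows $\tilde C\mathbf v=0$ with $\mathbf v=(1,d_1,\dots,d_\ell)^t$, $d_j:=\chi_j(e)$; running the same computation for $\bar\gamma$ (whose matrix is $M^t$) shows $\tilde C^t\mathbf v=0$ as well.

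For (i): since $\chi_\gamma(c_k)\ne n$ for $k\ge 1$, $\det C\ne 0$, so $|K(\gamma)|=|\coker(C^t)|=|\det C|$, and $\det C$ equals the $(0,0)$-cofactor of $\tilde C$. As $\tilde C$ has corank one with left and right kernel both spanned by $\mathbf v$, one has $\mathrm{adj}(\tilde C)=c\,\mathbf v\mathbf v^t$ for a scalar $c$; its trace equals on one hand $c\langle\mathbf v,\mathbf v\rangle=c\sum_k d_k^2=c|G|$ and on the other the product of the nonzero eigenvalues of $\tilde C$, namely $\prod_{k=1}^\ell(n-\chi_\gamma(c_k))$, while the $(0,0)$-entry of $\mathrm{adj}(\tilde C)$ is $c$ because $\mathbf v_0=1$. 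Hence $|K(\gamma)|=|c|=\tfrac1{|G|}\prod_{k=1}^\ell(n-\chi_\gamma(c_k))$, where one checks this product is a positive integer by pairing each $c_k$ with $c_k^{-1}$ and using $|\chi_\gamma(c_k)|<n$. This is precisely the ``matrix--tree''-style determinant identity of Lorenzini, which I would cite for the cofactor step.

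For (ii): $\chi_\gamma$ is real-valued, so $M=M^t$, $\tilde C$ is symmetric, and $\coker(\tilde C)=\coker(\tilde C^t)=\Z\oplus K(\gamma)$. The integer $d$ is an eigenvalue of $\tilde C$ of multiplicity $m$, so $\Lambda:=\ker_{\Z^{\ell+1}}(\tilde C-dI)$ is a saturated rank-$m$ sublattice of $\Z^{\ell+1}$ on which $\tilde C$ acts as multiplication by $d$, and its image in $\coker(\tilde C)$ is $\bar\Lambda:=\Lambda/(\Lambda\cap\tilde C\Z^{\ell+1})$, a quotient of $\Lambda/d\Lambda\cong(\Z/d\Z)^m$. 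The crux is to control the sublattice being divided out: with $A:=\{w\in\Z^{\ell+1}:\tilde C w\in\Lambda\}$, the map $\tilde C$ induces an isomorphism $A/\Z\mathbf v\xrightarrow{\sim}\Lambda\cap\tilde C\Z^{\ell+1}$ taking $\Lambda$ onto $d\Lambda$, so $(\Lambda\cap\tilde C\Z^{\ell+1})/d\Lambda\cong A/(\Lambda\oplus\Z\mathbf v)$, and this last group embeds into $\Z/d\Z$ by sending $w=w_1+t\mathbf v$ (the unique decomposition with $w_1\in\Lambda\otimes\Q$, $t\in\Q$) to $dt\bmod d$ — integrality of $dt$ comes from $\mathbf v$ having $c_0$-coordinate $1$, and injectivity from saturation of $\Lambda$. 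Thus $\bar\Lambda$ is a quotient of $(\Z/d\Z)^m$ by a cyclic subgroup, and such a quotient always contains $(\Z/d\Z)^{m-1}$ (prime-locally the cyclic generator becomes, after removing a power of $p$, a primitive vector, hence part of a basis, splitting off $(\Z/p^{a}\Z)^{m-1}$). So $(\Z/d\Z)^{m-1}\subseteq\bar\Lambda\subseteq\Z\oplus K(\gamma)$, and being torsion it lies in $K(\gamma)$; this is the content of a structural theorem of Lorenzini, which I would invoke directly in place of the lattice bookkeeping.

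The step I expect to be the main obstacle is exactly this last index bound in (ii): the spectral input and part (i) are essentially immediate, but proving that the ``defect'' $(\Lambda\cap\tilde C\Z^{\ell+1})/d\Lambda$ is cyclic — equivalently, that only one cyclic factor is lost in passing from $(\Z/d\Z)^m$ to the actual subgroup of $K(\gamma)$ — genuinely uses both the special form of the kernel vector $\mathbf v$ and the saturation of $\Lambda$, and is the part I would lean on \cite{Lor} for.
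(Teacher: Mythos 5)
Your proposal is correct and follows essentially the same route as the paper: both reduce the theorem to Lorenzini's Propositions 2.1 and 2.3 via the spectral description of $\tilde{C}$ (eigenvectors the columns of the character table, eigenvalues $n-\chi_\gamma(c_i)$, kernel of $\tilde{C}$ and $\tilde{C}^t$ spanned by the degree vector, whose leading entry is $1$ and whose self-pairing is $|G|$), and your adjugate computation for (i) and lattice bookkeeping for (ii) are just unwindings of the two results the paper cites directly from \cite{Lor}. One small overstatement: faithfulness gives $\chi_\gamma(c_k)\neq n$, equivalently $\mathrm{Re}\,\chi_\gamma(c_k)<n$, for $k\geq 1$ — not $|\chi_\gamma(c_k)|<n$, since a scalar $\gamma(c_k)=\zeta I$ with $\zeta\neq 1$ is not trivial (e.g.\ the sign representation of $\Z/2\Z$) — but only the weaker, correct fact is actually used in your argument.
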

\begin{cor} \label{regular cor}
The regular representation $\gamma$ of a finite group $G$ has $K(\gamma) \cong (\Z/|G| \Z)^{\ell-1}$.
\end{cor}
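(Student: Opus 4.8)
The plan is to derive this directly from Theorem \ref{Lorenzini applied}, since the regular representation has an especially simple character. First I would recall that the regular representation $\gamma$ of $G$ is faithful and $|G|$-dimensional, so $n = |G|$, and its character satisfies $\chi_\gamma(e) = |G|$ while $\chi_\gamma(c) = 0$ for every non-identity conjugacy class. In particular $\chi_\gamma$ is integer-valued, hence real-valued, so both parts of Theorem \ref{Lorenzini applied} apply.

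Next I would apply part (i): with $c_1,\dots,c_\ell$ the non-identity conjugacy class representatives, each factor $n - \chi_\gamma(c_i)$ equals $|G| - 0 = |G|$, so
\[
|K(\gamma)| = \frac{1}{|G|}\prod_{i=1}^{\ell}\bigl(n - \chi_\gamma(c_i)\bigr) = \frac{1}{|G|}\,|G|^{\ell} = |G|^{\ell-1}.
\]

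Then I would apply part (ii) to the integer character value $0$. This value is achieved precisely by the $\ell$ non-identity conjugacy classes (there are $\ell+1$ classes in all), so in the notation of the theorem $m = \ell$, and since $n - 0 = |G|$ we conclude that $K(\gamma)$ contains a subgroup isomorphic to $(\Z/|G|\Z)^{\ell-1}$. This subgroup has order $|G|^{\ell-1}$, which by the computation above equals $|K(\gamma)|$; a subgroup whose order equals that of the ambient finite abelian group is the whole group, so $K(\gamma) \cong (\Z/|G|\Z)^{\ell-1}$.

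There is no real obstacle here beyond bookkeeping: the only points requiring care are correctly reading off the regular character values and confirming that the value $0$ is hit by exactly $\ell$ classes, so that the exponent in part (ii) comes out to $\ell - 1$ and the order count forces equality.
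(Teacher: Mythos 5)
Your proof is correct and is essentially identical to the paper's own argument (given in Example \ref{coincide}): both apply Theorem \ref{Lorenzini applied} (i) to compute $|K(\gamma)| = |G|^{\ell-1}$ and part (ii) with $m = \ell$ to exhibit a subgroup $(\Z/|G|\Z)^{\ell-1}$, then conclude by comparing orders. No issues.
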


Finally, Section \ref{symmetric group} derives an explicit formula for the critical group in the case of the reflection representation of $\mf{S}_n$:

\begin{theorem}
\label{reflection rep}
Let $\gamma$ be the reflection representation of $\mf{S}_n$ and let $p(j)$ denote the number of partitions of the integer $j$.  Then
\[
K(\gamma) \cong \bigoplus_{i=2}^{p(n)-p(n-1)} \Z/q_i \Z
\]
where
\[
q_i=\prod_{\substack{1 \leq j \leq n \\ p(j)-p(j-1)\geq i}}j
\]
\end{theorem}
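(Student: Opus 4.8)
The plan is to recognize the extended McKay--Cartan matrix of $\gamma$ as an explicit operator on Young's lattice and then induct on $n$, using a pair of ``near-inverse'' maps to kill the relevant extension ambiguity. Write $Y_k$ for the partitions of $k$, let $U_k\colon\Z Y_k\to\Z Y_{k+1}$ and $D_k\colon\Z Y_k\to\Z Y_{k-1}$ add resp. remove a box, and put $a_k=p(k)-p(k-1)$; recall $(a_k)$ is nondecreasing and $\sum_{k\le n}a_k=p(n)$. Since $\C^n=\mathbf 1\oplus\gamma$ as $\mf S_n$-modules, the reflection character is $\chi_{\mathrm{perm}}-\chi_0$; by the projection formula $\chi_{\mathrm{perm}}\chi_\lambda=\mathrm{Ind}\,\mathrm{Res}\,\chi_\lambda$ and by branching $\mathrm{Res}\,\chi_\lambda=\sum_{\mu\lessdot\lambda}\chi_\mu$, so $M=U_{n-1}D_n-I$, and as $\dim\gamma=n-1$,
\[
\tilde C_n:=(n-1)I-M=nI-U_{n-1}D_n
\]
on $\Z Y_n$ (this is symmetric, so $\coker\tilde C_n^{\,t}=\coker\tilde C_n$). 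First I would assemble the ingredients: (i) $D_n$ is surjective over $\Z$ --- on the columns $\mu\mapsto\mu+(\text{box in row }1)$ it is unitriangular for dominance order --- so $\ker D_n$ and $Q_n:=\Z Y_n/U_{n-1}(\Z Y_{n-1})$ are free of rank $a_n$; (ii) the identity $D_nU_{n-1}=U_{n-2}D_{n-1}+I$ in Young's lattice gives intertwiners $D_n\tilde C_n=\tilde C_{n-1}D_n$ and $U_{n-1}\tilde C_{n-1}=\tilde C_nU_{n-1}$; (iii) $\tilde C_n$ acts on $\ker D_n$ as $n\cdot\mathrm{id}$ and descends to $n\cdot\mathrm{id}$ on $Q_n$; (iv) the Plancherel vectors $v_k=\sum_{\lambda\vdash k}f^\lambda\lambda$ satisfy $U_{k-1}v_{k-1}=v_k$ and $D_kv_k=kv_{k-1}$ (induction via (ii)), and $\ker\tilde C_k=\Z v_k$ with $v_k$ primitive because $f^{(k)}=1$.

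Applying the snake lemma to
\[
0\to\ker D_n\to\Z Y_n\xrightarrow{D_n}\Z Y_{n-1}\to0\quad\text{and}\quad0\to\Z Y_{n-1}\xrightarrow{U_{n-1}}\Z Y_n\to Q_n\to0,
\]
each compatible (by (ii)--(iii)) with the appropriate triple of endomorphisms, produces a surjection $\beta\colon K_n:=\coker\tilde C_n\to K_{n-1}:=\coker\tilde C_{n-1}$ induced by $D_n$ and an injection $\alpha\colon K_{n-1}\to K_n$ induced by $U_{n-1}$. The connecting map of the first sequence sends a generator $v_{n-1}$ of $\ker\tilde C_{n-1}$ to the class of $\tilde C_n\widetilde x=n\widetilde x-v_n$ in $\ker D_n/n\ker D_n\cong(\Z/n)^{a_n}$ (where $D_n\widetilde x=v_{n-1}$, using $U_{n-1}v_{n-1}=v_n$); since $f^{(n)}=1$, this class is indivisible by every prime dividing $n$, hence spans a $\Z/n$-direct summand, so $\ker\beta\cong(\Z/n)^{a_n-1}$. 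Finally, $\beta\alpha$ and $\alpha\beta$ are the maps induced on cokernels by $D_nU_{n-1}=nI-\tilde C_{n-1}$ and $U_{n-1}D_n=nI-\tilde C_n$, and $\tilde C$ acts as $0$ on its own cokernel, so $\beta\alpha=n\cdot\mathrm{id}_{K_{n-1}}$ and $\alpha\beta=n\cdot\mathrm{id}_{K_n}$.

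These identities pin down the extension. From $\alpha$ injective and $\alpha\beta=n\cdot\mathrm{id}$ one gets $\ker\beta=K_n[n]$, which is torsion, so writing $T_k:=(K_k)_{\mathrm{tors}}$ we have $T_n[n]\cong(\Z/n)^{a_n-1}$; from $\beta$ surjective and $\alpha\beta=n\cdot\mathrm{id}$ one gets $nK_n=\alpha(K_{n-1})$, hence $nT_n\cong T_{n-1}$. By induction $T_{n-1}\cong\bigoplus_{i=2}^{a_{n-1}}\Z/q_i^{(n-1)}\Z$ with $q_i^{(m)}:=\prod_{1\le j\le m,\,a_j\ge i}j$ (so $q_i^{(m)}=1$ for $i>a_m$), and $q_i^{(n)}=n\,q_i^{(n-1)}$ for $2\le i\le a_n$; a prime-by-prime comparison then forces $T_n\cong\bigoplus_{i=2}^{a_n}\Z/q_i^{(n)}\Z$. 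Indeed, if $p\nmid n$ then $T_n^{(p)}=nT_n^{(p)}\cong T_{n-1}^{(p)}$, matching $v_p(q_i^{(n)})=v_p(q_i^{(n-1)})$; and if $v=v_p(n)\ge1$, then $T_n^{(p)}[p^v]\cong(\Z/p^v)^{a_n-1}$ forces $T_n^{(p)}$ to have exactly $a_n-1$ elementary divisors, each $\ge p^v$, while $p^vT_n^{(p)}\cong T_{n-1}^{(p)}$ fixes their exponents as $v+v_p(q_i^{(n-1)})$ ($2\le i\le a_{n-1}$) together with $a_n-a_{n-1}$ copies of $p^v$ --- exactly $v_p(q_i^{(n)})$ for $2\le i\le a_n$ (using $a_n\ge a_{n-1}$ to pad). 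This completes the induction; the base cases $n\le 2$ are immediate, and since $\coker\tilde C_n\cong\Z\oplus T_n$ we read off $K(\gamma)\cong T_n$.

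I expect the two delicate points to be (a) verifying that the connecting class above has order exactly $n$ --- this is precisely where the normalization $f^{(n)}=1$ enters --- and (b) observing that the near-inverse pair $(\alpha,\beta)$ with $\alpha\beta=\beta\alpha=n\cdot\mathrm{id}$ is what rigidifies the extension, since the two snake sequences by themselves leave $T_n$ ambiguous. A direct attempt to diagonalize $\tilde C_n$ over $\Z$ via the rational eigenspace decomposition $\Q Y_n=\bigoplus_k U_{n-1}\cdots U_k(\ker D_k)$ (on which $\tilde C_n$ acts by the scalar $k$) does not shortcut this, because that sublattice has nontrivial index in $\Z Y_n$ and the index is exactly what assembles the separate factors of $k$ into the products $q_i$.
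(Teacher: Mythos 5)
Your argument is correct, but it is a genuinely different proof from the one in the paper. The paper gets the identification $\tilde C=nI-U_{n-1}D_n$ by citing a Kronecker-product formula of Ballantine--Orellana, and then simply quotes the Cai--Stanley theorem giving the Smith normal form of $U_{n-1}D_n+tI$ over $\Z[t]$, substituting $t\mapsto t-n$ and setting $t=0$. You instead (a) rederive the identification more transparently from $\chi_{\mathrm{perm}}\chi_\lambda=\mathrm{Ind}\,\mathrm{Res}\,\chi_\lambda$ and branching, and (b) prove the $t=0$ specialization of Cai--Stanley from scratch by a differential-poset induction: the relation $DU=UD+I$ gives intertwiners, the two snake-lemma sequences produce $\alpha\colon K_{n-1}\hookrightarrow K_n$ and $\beta\colon K_n\twoheadrightarrow K_{n-1}$ with $\alpha\beta=\beta\alpha=n\cdot\mathrm{id}$, and the two resulting constraints $T_n[n]\cong(\Z/n)^{a_n-1}$ and $nT_n\cong T_{n-1}$ pin down $T_n$ prime by prime. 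I checked the delicate points you flagged and they hold: since $\ker D_n$ is a direct summand of $\Z Y_n$ and $p\mid n$ kills $n\tilde x$, the connecting class equals $-[v_n]$ modulo $p$, and primitivity of $v_n$ (i.e.\ $f^{(n)}=1$) gives it order exactly $n$, hence a $\Z/n$-summand; the elementary-divisor bookkeeping in the inductive step is also correct, using $a_{n-1}\le a_n$ for the padding. What the two approaches buy: the paper's route is short and yields, via the cited theorem, the full $\Z[t]$ Smith form of $tI-\tilde C$ (hence more than just the critical group), at the cost of two black boxes; yours is self-contained modulo standard facts about Young's lattice, and is essentially the ``near-inverse pair'' strategy used in the literature on Smith forms of $DU$ in differential posets, so it would generalize to other settings where a $\Z[t]$ Smith form is not known to exist. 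The only places where a referee would ask for more detail are the unitriangularity argument for the surjectivity of $D_n$ over $\Z$ and the monotonicity of $p(n)-p(n-1)$, both of which are standard and true.
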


\begin{example}
Let $G=\mf{S}_4$ and consider the reflection representation $\gamma$ of $G$ (this is the action by permutation matrices on $\C^4$ with the copy of the trivial representation removed; the associated integer partition is $(3,1)$).  The character table is
\begin{center}
\begin{tabular}{|c|c|c|c|c|c|}
\hline
& $e$ & $(12)$ & $(123)$ & $(1234)$ & $(12)(34)$ \\ \hline
$\chi_0$ & 1 & 1& 1 & 1 & 1\\ \hline
$\chi_{\gamma}=\chi_1$ & 3 & 1 & 0 & -1 & -1 \\ \hline
$\chi_2$ & 2 & 0 & -1 & 0 & 2 \\ \hline
$\chi_3$ & 3 & -1 & 0 & 1 & -1 \\ \hline
$\chi_4$ & 1 & -1 & 1 & -1 & 1 \\ \hline
\end{tabular}
\end{center}
The matrices $M, \tilde{C}$ and $C$ associated to $\gamma$ are
\begin{align*}
&M = \begin{pmatrix}
0 & 1 & 0 & 0 & 0 \\ 1 & 1 & 1 & 1 & 0 \\ 0 & 1 & 0 & 1 & 0 \\ 0 & 1 & 1 & 1 & 1 \\ 0 & 0 & 0 & 1 & 0
\end{pmatrix}
&\tilde{C} = \begin{pmatrix}
3 & -1 & 0 & 0 & 0 \\ -1 & 2 & -1 & -1 & 0 \\ 0 & -1 & 3 & -1 & 0 \\ 0 & -1 & -1 & 2 & -1 \\ 0 & 0 & 0 &-1 & 3
\end{pmatrix}
&\hspace{.15in} C = \begin{pmatrix}
2 & -1 & -1 & 0 \\ -1 & 3 & -1 & 0 \\ -1 & -1 & 2 & -1 \\ 0 & 0 & -1 & 3
\end{pmatrix}
\end{align*}
To calculate $K(\gamma)$, we calculate that $C$ has Smith normal form $\diag(1,1,1,4)$, or equivalently that $\tilde{C}$ has Smith form $\diag(1,1,1,4,0)$.  Thus $K(\gamma) \cong \Z/4 \Z$.  

This example illustrates Theorems \ref{Lorenzini applied} and \ref{reflection rep}.  By Theorem \ref{Lorenzini applied} (i), we know 
$$|K(\gamma)|=\frac{1}{4!} (3-1)(3-0)(3+1)(3+1)=4$$
and by part (ii) we know that $K(\gamma)$ has a subgroup isomorphic to $\Z/4 \Z$, since $\chi_{\gamma}$ has a repeated integer character value -1.  Since $\gamma$ is the reflection representation of the symmetric group $\mf{S}_4$, Theorem \ref{symmetric group} also applies.  The relevant partition numbers are $p(1)=1, p(2)=2, p(3)=3$, and $p(4)=5$.  Therefore $q_2=4$ and Theorem \ref{reflection rep} then implies $K(\gamma) \cong \Z/4\Z$. 
\end{example}

\section{Proof of Theorem \ref{Lorenzini applied}} \label{structure}

Recall that for $\Gamma$ a simple connected graph with Laplacian matrix $\tilde{L}$ its \textit{critical group} is defined as $K(\Gamma):= \coker(L)$ where $L$ is obtained from $\tilde{L}$ by removing any row and corresponding column.  If $\Gamma$ has $\ell+1$ vertices and $\tilde{L}$ has eigenvalues $0=\lambda_0, \lambda_1,..., \lambda_{\ell}$ then a classical result (see for example \cite[Corollary 6.5]{Biggs} and Kirchhoff's Matrix Tree Theorem) expresses the size of the critical group as:
\[
|K(\Gamma)|=\frac{\lambda_1 \cdots \lambda_{\ell}}{\ell + 1}
\]
Theorem \ref{Lorenzini applied} (i) gives the analogous result for critical groups of group representations.  The following general result of Lorenzini about $(\ell+1) \times (\ell+1)$-integer matrices of rank $\ell$ will be useful.

\begin{prop}\cite[Propositions 2.1 and 2.3]{Lor}
\label{Lorenzini prop}
Let $M$ be any $(\ell+1) \times (\ell+1)$-integer matrix of rank $\ell$ with characteristic polynomial $\normalfont{char}_M(x)=x \prod_{i=1}^{\ell} (x-\lambda_i)$.  Let $R$ be an integer vector in lowest terms generating the kernel of $M$, and let $R'$ be the corresponding vector for $M^t$.  Let $H$ be the torsion subgroup of the cokernel of $\Z^{\ell+1} \xrightarrow{M} \Z^{\ell+1}$.
\begin{enumerate}
\item[(i)] 
\[
\prod_{i=1}^{\ell}\lambda_i= \pm |H| (R \cdot R')
\]
where $R \cdot R'$ denotes the dot product.
\item[(ii)] Let $\lambda \neq \pm 1, 0$ be an integer eigenvalue of $M$, and $\mu(\lambda)=\dim_{\Q} \ker(M-\lambda I)$.  If $M$ is symmetric and $R$ has at least one entry with value $\pm 1$ then $C$ contains a subgroup isomorphic to $(\Z/\lambda \Z)^{\mu(\lambda)-1}$.
\end{enumerate}
\end{prop}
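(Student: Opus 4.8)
The plan is to route both parts through the classical adjugate matrix $\text{adj}(M)$, exploiting that $\det M=0$. Since $M\,\text{adj}(M)=\text{adj}(M)\,M=(\det M)I=0$, every column of $\text{adj}(M)$ lies in $\ker M=\langle R\rangle_{\Q}$ and every row lies in $\ker M^t=\langle R'\rangle_{\Q}$. As $M$ has rank exactly $\ell$, the adjugate has rank one, so it must take the form $\text{adj}(M)=\alpha\,R(R')^t$ for a single scalar $\alpha$. Because $R$ and $R'$ are primitive, the matrix $R(R')^t$ is itself primitive (its entries have gcd $1$), forcing $\alpha\in\Z$ with $|\alpha|$ equal to the gcd of all entries of $\text{adj}(M)$; that gcd is the $\ell$-th determinantal divisor $D_\ell$ of $M$, which by the Smith normal form equals the product of the invariant factors and hence $|H|$.

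To finish (i) I would compute $\tr(\text{adj}(M))$ two ways. The diagonal cofactors are the principal $\ell\times\ell$ minors, so $\tr(\text{adj}(M))=\sum_i\det(M_{\hat i\hat i})=e_\ell(M)$, the $\ell$-th elementary symmetric function of the eigenvalues; reading the coefficient of $x$ off $\text{char}_M(x)=x\prod_{i=1}^{\ell}(x-\lambda_i)$ identifies $e_\ell(M)=\prod_{i=1}^{\ell}\lambda_i$. On the other hand $\tr(\alpha R(R')^t)=\alpha(R\cdot R')$. Equating the two yields $\prod_{i=1}^{\ell}\lambda_i=\alpha(R\cdot R')=\pm|H|(R\cdot R')$, which is (i).

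For (ii), symmetry of $M$ gives $R=R'$ and an orthogonal eigenspace decomposition of $\Q^{\ell+1}$. I would fix a coordinate $k$ with $R_k=\pm1$ and pass to the principal submatrix $C:=M_{\hat k\hat k}$, which is again symmetric and, by the computation above, satisfies $\det C=\text{adj}(M)_{kk}=\alpha R_k^2=\pm|H|\neq0$, so $C$ is invertible. Writing $V_\lambda=\ker_{\Q}(M-\lambda I)$, any $v\in V_\lambda$ with $v_k=0$ truncates to a vector $\hat v\in\Q^{\ell}$ with $C\hat v=\lambda\hat v$; since $\{v\in V_\lambda:v_k=0\}$ is cut out by a single linear condition on the $\mu$-dimensional space $V_\lambda$, the $\lambda$-eigenspace $V^{C}_\lambda$ of $C$ has dimension at least $\mu-1$. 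Set $\Lambda:=V^{C}_\lambda\cap\Z^{\ell}$, a saturated sublattice of rank $\dim V^{C}_\lambda\ge\mu-1$.

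The crux is to show that $\Lambda\hookrightarrow\Z^{\ell}\to\coker(C)$ induces an injection $\Lambda/\lambda\Lambda\hookrightarrow\coker(C)$, i.e. that $\Lambda\cap C\Z^{\ell}=\lambda\Lambda$. The inclusion $\lambda\Lambda\subseteq\Lambda\cap C\Z^{\ell}$ is clear because $\lambda x=Cx$ for $x\in\Lambda$. The reverse inclusion is the \emph{main obstacle}: given $x=Cu\in\Lambda$ with $u\in\Z^{\ell}$, decompose $u=u_\lambda+u'$ orthogonally with $u_\lambda\in V^{C}_\lambda$ and $u'\perp V^{C}_\lambda$; then $Cu'$ is the $(V^{C}_\lambda)^{\perp}$-component of $x\in V^{C}_\lambda$, hence $Cu'=0$, and invertibility of $C$ forces $u'=0$. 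Thus $u=u_\lambda\in V^{C}_\lambda\cap\Z^{\ell}=\Lambda$ and $x=Cu=\lambda u\in\lambda\Lambda$. This is precisely the step where invertibility of $C$—guaranteed by the $\pm1$ hypothesis via part (i)—is indispensable, since the kernel direction $R$ would otherwise obstruct integrality of the lift. Consequently $\coker(C)\supseteq\Lambda/\lambda\Lambda\cong(\Z/\lambda\Z)^{\dim V^{C}_\lambda}\supseteq(\Z/\lambda\Z)^{\mu-1}$, and the standard identification $\coker(M)_{\text{tors}}=H\cong\coker(C)$ (valid as $R_k=\pm1$) lets the conclusion be read off whichever group the statement names. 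The hypotheses $\lambda\neq\pm1,0$ serve only to keep the conclusion meaningful, with $\lambda\neq0$ essential for the orthogonality $V_\lambda\perp\ker M$ and for dividing by $\lambda$.
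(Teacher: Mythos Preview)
The paper does not supply a proof here: Proposition~\ref{Lorenzini prop} is quoted from Lorenzini and used as a black box, so there is no in-paper argument to compare against. Your argument is correct. For (i), the adjugate route is the natural one: $\text{adj}(M)$ has rank one with columns in $\ker M$ and rows in $\ker M^{t}$, hence $\text{adj}(M)=\alpha R(R')^{t}$; primitivity of $R$ and $R'$ makes $|\alpha|$ the gcd of all $\ell\times\ell$ minors, i.e.\ $D_{\ell}=s_{1}\cdots s_{\ell}=|H|$; and $\tr(\text{adj}(M))=\sum_{i}\det(M_{\hat{i}\hat{i}})=\prod_{i}\lambda_{i}$ by reading off the coefficient of $x$ in $\text{char}_{M}(x)$. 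For (ii), your passage to the nonsingular principal submatrix and the lattice computation $\Lambda\cap C\Z^{\ell}=\lambda\Lambda$ are sound; the orthogonal-projection step uses only that $C$ is symmetric and invertible over $\Q$. The one point you label ``standard''---that $H\cong\coker(C)$ when $R_{k}=\pm 1$---deserves a line of justification: $R_{k}=\pm 1$ gives the splitting $\Z^{\ell+1}=\Z e_{k}\oplus(R^{\perp}\cap\Z^{\ell+1})$, the image $M\Z^{\ell+1}$ lies in the second summand (since $R\in\ker M^{t}$), and deleting coordinate $k$ identifies that summand with $\Z^{\ell}$ while carrying $M\Z^{\ell+1}$ onto $C\Z^{\ell}$ (the relation $MR=0$ with $R_{k}=\pm 1$ writes the deleted column of $M$ as an integer combination of the others). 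This yields $\coker(M)\cong\Z\oplus\coker(C)$, so $H\cong\coker(C)$ as needed.
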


Translating this proposition into the context of critical groups of group representations allows us to prove Theorem \ref{Lorenzini applied}.

\begin{prop}\cite[Proposition 5.3]{BKR}
\label{eigenvectors}
Let $\gamma$ be a faithful complex representation of a finite group $G$.
\begin{enumerate}
\item[(i)] A full set of orthogonal eigenvectors for $\tilde{C}$ is given by the set of columns of the character table of $G$:
\[
\delta^{(g)}=(\chi_0(g),...,\chi_{\ell}(g))^t
\]
as $g$ ranges over a collection of conjugacy class representatives for $G$.
\item[(ii)] The corresponding eigenvalues are given by
\[
\tilde{C} \delta^{(g)}=(n-\chi_{\gamma}(g))\cdot \delta^{(g)}
\]
\item[(iii)] The vector $\delta^{(e)}=(\chi_0(e),...,\chi_{\ell}(e))^t$ spans the nullspace of both $\tilde{C}$ and $\tilde{C}^t$.  In particular, this implies that these matrices have rank $\ell$.
\end{enumerate}
\end{prop}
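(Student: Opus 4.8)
The plan is to reduce all three parts to the single defining relation $\chi_\gamma \chi_i = \sum_{j=0}^\ell m_{ij}\chi_j$ together with the standard orthogonality relations for characters. The computational heart is part (ii), which I would establish first. Evaluating the defining relation at a conjugacy class representative $g$ gives $\chi_\gamma(g)\chi_i(g) = \sum_{j=0}^\ell m_{ij}\chi_j(g)$ for every $i$. Reading the right-hand side as the $i$-th coordinate of $M\delta^{(g)}$ and the left-hand side as $\chi_\gamma(g)$ times the $i$-th coordinate of $\delta^{(g)}$, this says precisely that $M\delta^{(g)} = \chi_\gamma(g)\,\delta^{(g)}$. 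Hence $\tilde{C}\delta^{(g)} = (nI - M)\delta^{(g)} = (n - \chi_\gamma(g))\,\delta^{(g)}$, which is exactly (ii) and simultaneously exhibits each column $\delta^{(g)}$ as an eigenvector of $\tilde{C}$.

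For part (i), I would invoke column orthogonality of the character table: for conjugacy class representatives $g, h$,
\[
\langle \delta^{(g)}, \delta^{(h)}\rangle = \sum_{i=0}^\ell \overline{\chi_i(g)}\,\chi_i(h) = |C_G(g)|\,[\,g \sim h\,],
\]
so distinct columns are orthogonal under the Hermitian inner product. Since $G$ has exactly $\ell + 1$ irreducible characters and $\ell+1$ conjugacy classes, the $\ell+1$ vectors $\delta^{(g)}$ are pairwise orthogonal, hence linearly independent, and therefore form a full orthogonal eigenbasis of $\C^{\ell+1}$; combined with (ii) this gives (i). The first half of (iii) is then immediate: the eigenvalue attached to $\delta^{(e)}$ is $n - \chi_\gamma(e) = n - n = 0$ since $\chi_\gamma(e) = \dim\gamma = n$, so $\delta^{(e)} \in \ker\tilde{C}$. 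To see that this accounts for the entire nullspace, I would show $n - \chi_\gamma(g) \neq 0$ for every $g \neq e$: the eigenvalues of $\gamma(g)$ are roots of unity, so by the triangle inequality $|\chi_\gamma(g)| \le n$ with equality only when $\gamma(g)$ is a scalar matrix, and $\chi_\gamma(g) = n$ forces $\gamma(g) = I$; faithfulness then gives $g = e$, a contradiction. Thus $\delta^{(e)}$ spans $\ker\tilde{C}$ and $\mathrm{rank}(\tilde{C}) = \ell$.

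The one genuinely nonautomatic point -- the main obstacle -- is the claim for $\tilde{C}^t$, because $M$ need not be symmetric (it is symmetric only when $\gamma$ is self-dual), so being a right null vector of $\tilde{C}$ does not by itself make $\delta^{(e)}$ a left null vector. I would handle this by proving directly that $\delta^{(e)}$ is a left eigenvector of $M$ with eigenvalue $n$, i.e. that $\sum_{i=0}^\ell \chi_i(e)\,m_{ij} = n\,\chi_j(e)$ for all $j$. Writing $m_{ij} = \langle \chi_\gamma\chi_i, \chi_j\rangle$ and summing over $i$, the inner sum $\sum_i \overline{\chi_i(e)}\,\chi_i(g)$ collapses by column orthogonality to $|G|\,[\,g = e\,]$, leaving $\chi_\gamma(e)\,\overline{\chi_j(e)} = n\,\chi_j(e)$ as desired, all degrees being real. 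Equivalently, one can observe that $M^t$ is the McKay--Cartan matrix of the dual representation $\gamma^*$, which is again faithful of dimension $n$, and apply part (ii) to $\gamma^*$ at $g = e$. Either way $\tilde{C}^t\delta^{(e)} = (nI - M^t)\delta^{(e)} = 0$, and since $\mathrm{rank}(\tilde{C}^t) = \mathrm{rank}(\tilde{C}) = \ell$ the vector $\delta^{(e)}$ spans $\ker\tilde{C}^t$ as well, completing (iii).
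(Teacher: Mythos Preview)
Your argument is correct. Note, however, that the paper does not itself prove this proposition: it is quoted from \cite[Proposition 5.3]{BKR} and used as a black box in the proof of Theorem~\ref{Lorenzini applied}. So there is no ``paper's own proof'' to compare against here. Your write-up supplies a clean self-contained verification, and the points you flag as nontrivial are exactly the right ones: the faithfulness of $\gamma$ is what forces $n-\chi_\gamma(g)\neq 0$ for $g\neq e$, and the asymmetry of $M$ when $\gamma$ is not self-dual is why the $\tilde{C}^t$ statement needs its own argument. Both of your approaches to the latter are valid; the observation that $M^t$ is the $M$-matrix of the contragredient $\gamma^*$ is perhaps the most conceptual route.
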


We recall the statement of Theorem \ref{Lorenzini applied}.
\begin{thm3}
Let $G$ be a finite group with faithful complex $n$-dimensional representation $\gamma$ and critical group $K(\gamma)$.  Let $e=c_0,c_1,...,c_{\ell}$ be a set of conjugacy class representatives for $G$, then:
\begin{enumerate}
\item[(i)] 
\[
\frac{1}{|G|} \prod_{i=1}^{\ell} (n-\chi_{\gamma}(c_i))=|K(\gamma)|
\]

\item[(ii)]  If $\chi_{\gamma}$ is real-valued, and $\chi_{\gamma}(c)$ is an integer character value achieved by $m$ different conjugacy classes, then $K(\gamma)$ contains a subgroup isomorphic to \\ $(\Z/(n-\chi_{\gamma}(c)) \Z)^{m-1}$.
\end{enumerate}
\end{thm3}
\begin{proof}
Part (i) follows from Proposition \ref{Lorenzini prop} (i) with $M=\tilde{C}$: the eigenvalues of $\tilde{C}$ are given in Proposition \ref{eigenvectors} (ii), and we have that in this case $R=R'=(\chi_0(e),...,\chi_{\ell}(e))^t$ by Proposition \ref{eigenvectors} (iii).  Both $R$ and $R'$ are in lowest terms since $\chi_0(e)=1$.  Now, we have
\[
R \cdot R' = \sum_{i=0}^{\ell} \dim(\chi_i)^2 = |G|
\] 
To see that the left-hand-side in (i) is always positive, notice that when $\chi_{\gamma}(c_i)$ is real, then $n-\chi_{\gamma}(c_i)>0$.  If $\chi_{\gamma}(c_i)$ is complex, then $\bar{\chi_{\gamma}(c_i)}=\chi_{\gamma}(c_j)$ where $c_j$ is conjugate to $c_i^{-1}$.  In this case
\[
(n-\chi_{\gamma}(c_i))(n-\chi_{\gamma}(c_j))=|n-\chi_{\gamma}(c_i)|^2 >0 
\]

For part (ii), notice that $\mu(n-\chi_{\gamma}(c))=m$ since the eigenvectors of $\tilde{C}$ are orthogonal by Proposition \ref{eigenvectors} (i).  If $\chi_{\gamma}$ is real valued then
\begin{align*}
&\langle \chi_j, \chi_{\gamma} \chi_i \rangle = \frac{1}{|G|} \sum_{g \in G} \chi_j(g) \bar{\chi_{\gamma}(g) \chi_i(g)} \\
&= \frac{1}{|G|} \sum_{g \in G} \chi_j(g) \chi_{\gamma}(g) \bar{ \chi_i(g)} = \langle \chi_j \chi_{\gamma}, \chi_i \rangle
\end{align*}
so $\tilde{C}$ is symmetric in this case.  Finally, $\chi_{\gamma}(c) < n$, so we can rule out the cases $\lambda=0,-1$ in Proposition \ref{Lorenzini prop} (ii).  Applying this proposition then gives the desired result.
\end{proof}

The following interesting application of Theorem \ref{Lorenzini applied} was suggested by Vic Reiner:

\begin{example} \label{coincide}
Let $G$ be any finite group, and let $\gamma$ be the regular representation of $G$, which is always faithful.  We know that $\dim(\gamma)=|G|$ and that $\chi_{\gamma}$ is 0 on the non-identity conjugacy classes.  Therefore Theorem \ref{Lorenzini applied} (i) gives that $|G|^{\ell-1}=|K(\gamma)|$.  Furthermore, $\chi_{\gamma}$ is real-valued and the value 0 is achieved on $\ell$ conjugacy classes, so part (ii) implies that $K(\gamma)$ has a subgroup isomorphic to $(\Z/|G|\Z)^{\ell-1}$.  Combining these results shows that in fact $K(\gamma) \cong (\Z/|G|\Z)^{\ell-1}$, proving Corollary \ref{regular cor}.  In the special case that $G$ is abelian, this shows that $K(\gamma)\cong (\Z/|G|\Z)^{|G|-2}$.  Finally, if $G \cong \Z/n\Z$ then $\tilde{C}$ is equal to the graph Laplacian of the complete graph $K_n$.  This formula then recovers the well-known result that the critical group of the complete graph is $(\Z/n\Z)^{n-2}$.
\end{example}

\begin{cor} \label{Syl}
In the context of Theorem \ref{Lorenzini applied}, if $\chi_{\gamma}$ is $\Q$-valued, then $\Syl_p(K(\gamma))=0$ unless $p \leq 2n$.
\end{cor}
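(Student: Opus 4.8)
The plan is to read everything off the order formula in Theorem~\ref{Lorenzini applied}~(i), using only elementary facts about character values.

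First I would observe that since $\chi_{\gamma}$ is $\Q$-valued and every character value is an algebraic integer (being a sum of roots of unity, namely the eigenvalues of the finite-order operator $\gamma(g)$), the values $\chi_{\gamma}(c_i)$ are in fact rational integers. Next, because $\chi_{\gamma}(c_i)$ is a sum of $n$ roots of unity we have $|\chi_{\gamma}(c_i)| \leq n$, and faithfulness of $\gamma$ forces $\gamma(c_i) \neq I$ for $i \geq 1$, so that the inequality $\chi_{\gamma}(c_i) \leq n$ is strict. Hence each factor $n-\chi_{\gamma}(c_i)$ with $1 \leq i \leq \ell$ is an integer satisfying $1 \leq n-\chi_{\gamma}(c_i) \leq 2n$.

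Now I would invoke Theorem~\ref{Lorenzini applied}~(i), which gives $|K(\gamma)| \cdot |G| = \prod_{i=1}^{\ell}(n-\chi_{\gamma}(c_i))$; in particular $|K(\gamma)|$ divides $\prod_{i=1}^{\ell}(n-\chi_{\gamma}(c_i))$. If a prime $p$ divides $|K(\gamma)|$ then it divides this product, hence it divides some factor $n-\chi_{\gamma}(c_i)$, and since that factor is a positive integer at most $2n$ we get $p \leq 2n$. Finally, the Sylow $p$-subgroup of the finite abelian group $K(\gamma)$ is nontrivial precisely when $p \mid |K(\gamma)|$, so $\Syl_p(K(\gamma)) = 0$ whenever $p > 2n$.

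There is essentially no hard step here; the only points requiring care are the standard facts that a rational algebraic integer is a rational integer and that a character value of an $n$-dimensional representation has absolute value at most $n$, together with using faithfulness to ensure the factors $n-\chi_{\gamma}(c_i)$ are genuinely positive (so that ``divides a factor $\leq 2n$'' is meaningful). One could even sharpen the conclusion slightly, since a factor equals $2n$ only when $\gamma(c_i) = -I$, so that $p < 2n$ unless $-I \in \im(\gamma)$; but only the bound $p \leq 2n$ is claimed.
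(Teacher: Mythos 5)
Your proof is correct and takes essentially the same route as the paper: reduce to integrality of the character values (a rational sum of roots of unity is a rational integer), bound each factor $n-\chi_{\gamma}(c_i)$ by $2n$, and read off the prime divisors of $|K(\gamma)|$ from the product formula in Theorem~\ref{Lorenzini applied}~(i). The paper's version is just terser, leaving the divisibility step and the role of faithfulness implicit.
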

\begin{proof}
The fact that $\chi_{\gamma}$ is $\Q$-valued implies that it is $\Z$-valued since a sum of roots of unity is rational if and only if it is integral.  Thus $-n \leq \chi_{\gamma}(c_i) < n$ for all $i$, and so the product on the left hand side of part (i) of the theorem is a product of integers which are at most $2n$. 
\end{proof}

\begin{example}
The requirement that $\chi_{\gamma}$ is $\Q$-valued in Corollary \ref{Syl} is necessary.  Indeed, if $G=\langle g | g^m=1 \rangle$ is a cyclic group and $G \xrightarrow{\gamma} GL_2(\C)$ is given by
\[
g \mapsto \begin{pmatrix}
e^{2 \pi i/m} & 0 \\ 0 & e^{- 2 \pi i / m}
\end{pmatrix}
\]
then an easy calculation shows that $K(\gamma) \cong \Z/m \Z$.  In this case $|K(\gamma)|$ may have prime divisors up to $m$ which are larger than $2n=4$.
\end{example}

The following easy corollary of Theorem \ref{Lorenzini applied} was first observed by H. Blichfeldt in \cite[Corollary 2]{B}.

\begin{cor}
For any $n$-dimensional representation $\gamma$ of $G$ 
\[
\frac{1}{|G|} \prod_{i=1}^{\ell} (n-\chi_{\gamma}(c_i))
\]
lies in $\Z_{ \geq 0}$.
\end{cor}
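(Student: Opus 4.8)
The plan is to reduce to Theorem \ref{Lorenzini applied}(i) by a case split on whether $\gamma$ is faithful; the subtlety worth flagging is precisely that the corollary drops the faithfulness hypothesis present in that theorem, so the non-faithful case must be disposed of by hand (and there the statement degenerates).

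First, suppose $\gamma$ is faithful. Then I would simply invoke Theorem \ref{Lorenzini applied}(i): the displayed expression equals $|K(\gamma)|$, which is the order of a finite abelian group, hence a positive integer, and in particular lies in $\Z_{\geq 0}$. (If one wants positivity without appealing to the group structure, it is already shown in the proof of Theorem \ref{Lorenzini applied} via the pairing of each non-real value $\chi_\gamma(c_i)$ with its complex conjugate $\chi_\gamma(c_j)$, giving a factor $|n-\chi_\gamma(c_i)|^2>0$, while real values $c_i$ with $i\geq 1$ satisfy $\chi_\gamma(c_i)<n$ because $\gamma(c_i)\neq I$.)

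Next, suppose $\gamma$ is not faithful. Then $\ker\gamma$ is a nontrivial subgroup of $G$, so it contains some element $g\neq e$. Since $\gamma(g)$ is the identity matrix, $\chi_\gamma(g)=\chi_\gamma(e)=n$, and the conjugacy class of $g$ is one of $c_1,\dots,c_\ell$. Hence the factor $n-\chi_\gamma(g)=0$ occurs in $\prod_{i=1}^{\ell}(n-\chi_\gamma(c_i))$, so the whole quantity equals $0\in\Z_{\geq 0}$. I do not anticipate any real obstacle: the only point requiring care is remembering that faithfulness is needed to apply Theorem \ref{Lorenzini applied}, which is exactly what forces the two-case structure. (As an alternative uniform remark, one could note that $\prod_{i=1}^{\ell}(n-\chi_\gamma(c_i))$ is, up to sign, the coefficient of $x$ in the characteristic polynomial of the integer matrix $\tilde C$, hence an integer a priori, with nonnegativity coming from the conjugate-pairing argument and divisibility by $|G|$ being automatic in the non-faithful case since the product is zero.)
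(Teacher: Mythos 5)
Your proposal is correct and follows essentially the same route as the paper: apply Theorem \ref{Lorenzini applied}(i) when $\gamma$ is faithful, and otherwise observe that some $c_i\neq e$ lies in $\ker\gamma$ so that $\chi_\gamma(c_i)=n$ forces the product to vanish. The extra details you supply (positivity via conjugate pairing, integrality via the characteristic polynomial) are consistent with the paper but not needed beyond its two-line argument.
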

\begin{proof}
If $\gamma$ is faithful, Theorem \ref{Lorenzini applied} (i) implies that this expression is the order of $K(\gamma)$.  Otherwise $\chi_{\gamma}(c_i)=n$ for some $c_i \neq e$, so the product is 0 in this case.
\end{proof}

\section{Proof of Theorem \ref{reflection rep}} \label{symmetric group}
Theorem \ref{Lorenzini applied} (i) gives us the order of the abelian group $K(\gamma)$ and part (ii) gives some restrictions on the structure of this group, however this is not enough to uniquely specify $K(\gamma)$ in general.  Below we analyse the Smith normal form of $tI-\tilde{C}$ over $\Z[t]$ for the irreducible reflection representation of $\mf{S}_n$, which will allow us to determine the critical group in this case.  First we will briefly review basic facts about Smith normal form.

Let $R$ be a commutative ring with unit and $A \in R^{n \times n}$ be a matrix.   A matrix $S$ is called the \textit{Smith normal form} of $A$ if:
\begin{itemize}
\item There exist invertible matrices $P,Q \in R^{n \times n}$ such that $S=PAQ$.
\item $S$ is a diagonal matrix $S=\diag(s_1,...,s_n)$ with $s_i | s_{i+1}$ for $i=1,..., n-1$.
\end{itemize}

The following proposition lists several well-known facts about Smith normal form, see for example \cite[Section 2]{SNF}.

\begin{prop} \label{SNF facts} Let $A \in R^{n \times n}$.
\begin{itemize}
\item[(i)] Suppose $A$ has Smith normal form $S=\diag(s_1,...,s_n)$.  Then 
\[
\coker(A:R^n \to R^n) \cong \bigoplus_{i=1}^n R/(s_i)
\]
\item[(ii)] If $R$ is a PID then $A$ has a Smith normal form.
\item[(iii)] If $R$ is a domain, then the Smith normal form of $A$, if it exists, is unique up to multiplication of the $s_i$ by units in $R$.
\end{itemize}
\end{prop}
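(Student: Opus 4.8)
The plan is to prove the three parts in turn, deriving (i) from the behaviour of cokernels under isomorphisms, establishing (ii) by the classical reduction algorithm, and obtaining (iii) from the invariance of the ideals generated by the minors.

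For part (i), I would read the defining relation $S=PAQ$ with $P,Q\in GL_n(R)$ as $A=P^{-1}SQ^{-1}$, so that, viewing matrices as acting by left multiplication on column vectors, $\im(A)=A\,R^n=P^{-1}S\,(Q^{-1}R^n)=P^{-1}\,\im(S)$, using that the automorphism $Q^{-1}$ fixes $R^n$ setwise. The automorphism $P$ of $R^n$ then carries the submodule $P^{-1}\im(S)$ onto $\im(S)$ and $R^n$ onto itself, inducing an isomorphism $\coker(A)=R^n/\im(A)\cong R^n/\im(S)=\coker(S)$. Since $S=\diag(s_1,\dots,s_n)$ has image $\bigoplus_i (s_i)$, the right-hand side is $\bigoplus_i R/(s_i)$, giving the claim.

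For part (ii), I would run the standard Smith reduction using only the two permitted moves: left and right multiplication by invertible matrices, i.e. invertible row and column operations. The engine is the PID hypothesis in two forms: B\'ezout, so that $\gcd(a,b)=xa+yb$ for some $x,y\in R$, and the ascending chain condition satisfied by any Noetherian ring. Starting from a nonzero $A$, I move a nonzero entry to position $(1,1)$ by swaps; if some entry of the first row or column fails to be divisible by $a_{11}$, the $2\times 2$ determinant-one transformation built from a B\'ezout relation replaces $a_{11}$ by a proper divisor, strictly enlarging the ideal $(a_{11})$. By the ascending chain condition this terminates with an $(1,1)$ entry dividing its row and column, which I then clear to zero; a further reduction ensures this entry divides every remaining entry---otherwise adding the offending row into the first row and reducing again would strictly enlarge $(a_{11})$ once more---which is exactly what forces $s_1\mid s_2$. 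Recursing on the lower-right $(n-1)\times(n-1)$ block produces the diagonal form with the required divisibility chain.

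For part (iii), where $R$ is only assumed to be a domain, I cannot use greatest common divisors, so the natural invariants are the ideals $I_k(A)\subseteq R$ generated by the $k\times k$ minors of $A$ (with $I_0=R$). The generalized Cauchy--Binet formula expresses each $k\times k$ minor of $PAQ$ as an $R$-linear combination of $k\times k$ minors of $A$, giving $I_k(PAQ)\subseteq I_k(A)$; applying this to $A=P^{-1}(PAQ)Q^{-1}$ yields equality, so $I_k(A)=I_k(S)$ for any Smith form $S=\diag(s_1,\dots,s_n)$. For such $S$ the divisibility chain makes $(s_1\cdots s_k)$ contain every product $s_{j_1}\cdots s_{j_k}$, so $I_k(S)=(s_1\cdots s_k)$. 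Comparing two Smith forms of $A$ then gives $(s_1\cdots s_k)=(s_1'\cdots s_k')$ for all $k$, so the partial products are associates. I expect the fussiest step to be recovering the individual $s_k$: from $s_1\cdots s_k=u_k\,s_1'\cdots s_k'$ and $s_1\cdots s_{k-1}=u_{k-1}\,s_1'\cdots s_{k-1}'$ with $u_k,u_{k-1}$ units, I would cancel the common nonzero factor inside the fraction field of $R$ to conclude $s_k=(u_k u_{k-1}^{-1})\,s_k'$, a unit multiple, handling the terminal run of zero entries separately via the divisibility chain. The main obstacle is thus not any single deep idea but the care required to phrase everything over a general domain (minors rather than gcds) and to justify this cancellation; the bulk of the genuine work sits in the termination argument of part (ii).
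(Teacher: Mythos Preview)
Your proof is correct and follows the standard textbook arguments for all three parts. Note, however, that the paper does not actually prove this proposition: it is stated as a list of well-known facts with a pointer to \cite[Section~2]{SNF}, so there is no proof in the paper to compare against. The arguments you give---invariance of the cokernel under invertible base change for (i), the B\'ezout/ACC reduction for (ii), and invariance of the determinantal ideals $I_k$ via Cauchy--Binet for (iii)---are exactly the classical ones one would find in such a reference.
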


Let $Y$ denote Young's lattice, the set of all integer partitions ordered by inclusion of Young diagrams.  Let $\Z Y$ denote the free abelian group with basis $Y$, and let $Y_i$ denote the $i$-th rank of $Y$.  For each $i$, define linear maps called the \textit{up and down maps} $U_i: \Z Y_i \to \Z Y_{i+1}$ and $D_{i+1}: \Z Y_{i+1} \to \Z Y_i$ by
\begin{align*}
&U_i(y) = \sum_{\substack{z \in Y_{i+1} \\ y \leq z}} z 
&D_{i+1}(y) = \sum_{\substack{x \in Y_{i} \\ x \leq y}} x 
\end{align*} 
We will suppress the subscripts when they are clear from context.  Let $p(n)$ denote the number of integer partitions of $n$.

\begin{theorem}\cite[Theorem 1.2]{CS}
\label{stanley}
The matrix $U_{n-1}D_n+tI$ has Smith normal form \\ $\diag(\alpha_{p(n)}(t),...,\alpha_1(t))$ over $\Z[t]$ where
\[
\alpha_i(t)=\prod_{\substack{0 \leq k \leq n \\ p(n-k)-p(n-k-1)\geq i}}(t+k)
\]
\end{theorem}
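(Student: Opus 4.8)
The plan is to prove the statement by induction on $n$, using the fact that Young's lattice is a $1$-differential poset: the up and down maps satisfy the commutation relation $D_{m+1}U_m - U_{m-1}D_m = I$ on $\Z Y_m$ for every $m$ (equivalently, for partitions $\lambda,\mu$ of $m$, the number of partitions covering both $\lambda$ and $\mu$ minus the number covered by both equals $\delta_{\lambda\mu}$). First I would extract the rational spectral data. Setting $P_k := \ker(D_k\colon \Q Y_k \to \Q Y_{k-1})$, an induction on $j$ using the commutation relation gives $D_{k+1}U_k^{\,j}v = jU_k^{\,j-1}v$ for $v \in P_k$, hence the decomposition $\Q Y_n = \bigoplus_{k=0}^n U^{\,n-k}(P_k)$ and the fact that $U_{n-1}D_n$ acts on $U^{\,n-k}(P_k)$ as the scalar $n-k$. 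Since $D_k$ is surjective over $\Q$, we have $\dim P_k = p(k)-p(k-1)$, so the eigenvalue $j$ of $U_{n-1}D_n$ occurs with multiplicity $p(n-j)-p(n-j-1)$, giving $\det(tI + U_{n-1}D_n) = \prod_{j=0}^n (t+j)^{\,p(n-j)-p(n-j-1)} = \prod_{i=1}^{p(n)}\alpha_i(t)$. This matches the product of all invariant factors, but does not by itself pin down the individual $\alpha_i$.

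To obtain the full Smith normal form I would run the induction through the companion operator $D_nU_{n-1}$. The commutation relation gives $tI + D_nU_{n-1} = (t+1)I + U_{n-2}D_{n-1}$, so by the inductive hypothesis at rank $n-1$ (with $t$ replaced by $t+1$) its invariant factors are the polynomials $\alpha_i^{(n-1)}(t+1)$. Writing $m_k := p(n-k)-p(n-k-1)$ and reindexing, one checks $\alpha_i^{(n-1)}(t+1) = \prod_{1\le k\le n,\; m_k \ge i}(t+k)$, which is exactly the "$k\ge 1$" part of the desired $\alpha_i^{(n)}(t)$. The missing contribution is the $k=0$ factor $(t+0)=t$, which must be supplied in passing from $D_nU_{n-1}$ (size $p(n-1)$) up to $U_{n-1}D_n$ (size $p(n)$).

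The key lemma I would prove is an integral comparison of the two operators: for integer matrices $U \in \Z^{p\times q}$ and $D \in \Z^{q\times p}$ with $p\ge q$ and $DU$ nonsingular over $\Q$, the invariant factors of $tI_p + UD$ are obtained from those of $tI_q + DU$ by appending $p-q$ trivial factors and then multiplying the largest $p-q$ of them by $t$. Applying this with $p=p(n)$, $q=p(n-1)$ (so $p-q = p(n)-p(n-1) = m_0$, which equals the multiplicity of the eigenvalue $0$ of $U_{n-1}D_n$, and $DU=D_nU_{n-1}$ is nonsingular since its eigenvalues are all at least $1$), the extra factor $t$ enters precisely the largest $m_0$ invariant factors, i.e. those $\alpha_i$ with $i\le m_0$, which is exactly the condition $m_0 \ge i$. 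Combining with the previous paragraph produces $\alpha_i^{(n)}(t)=\prod_{0\le k\le n,\; m_k\ge i}(t+k)$ and closes the induction; the base case $n=0$ is immediate, and the divisibility $\alpha_{i+1}\mid\alpha_i$ holds directly because the defining condition on $k$ is monotone in $i$.

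The main obstacle is the integral comparison lemma, because $\Z[t]$ is not a PID and the naive factorizations fail: for instance $(t)$ and $(t+k)$ are not comaximal in $\Z[t]$ (their sum is $(t,k)$), so the invariant factors cannot be separated prime by prime, and one must rule out spurious integer factors appearing in the greatest common divisors of the minors of $tI + U_{n-1}D_n$. I would approach this through the maps induced by $U$ and $D$ on the cokernels, which satisfy $\overline U\,\overline D = \overline D\,\overline U = -t$ and hence become mutually inverse once $t$ is inverted, reducing the problem to a controlled analysis of the $t$-primary part; the relevant integrality is governed by the identity $D^{\,j}U^{\,j} = j!$ on $P_k$, which is what binds together the integers that appear upon specializing $t$. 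Alternatively, the Schur-complement reduction $\bigl(\begin{smallmatrix} tI & U \\ D & I\end{smallmatrix}\bigr)$ exhibits $tI + UD$, up to unimodular equivalence over $\Z[t]$ and trivial blocks, in a form directly comparable with $tI + DU$; forcing that comparison to track the $t$-primary part exactly is the crux of the argument.
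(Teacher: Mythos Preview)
The paper does not prove this theorem: it is quoted from Cai--Stanley \cite{CS} and invoked as a black box in the proof of Theorem~\ref{reflection rep}. There is therefore no in-paper argument to compare your proposal against.

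On the substance of your outline, there is a genuine gap. The ``integral comparison lemma'' you state --- that for arbitrary integer matrices $U\in\Z^{p\times q}$ and $D\in\Z^{q\times p}$ with $p\ge q$ and $DU$ nonsingular over $\Q$, the Smith form of $tI_p+UD$ over $\Z[t]$ is obtained from that of $tI_q+DU$ by adjoining $p-q$ ones and multiplying the top $p-q$ invariant factors by $t$ --- is false at that level of generality. Take $p=2$, $q=1$, $U=(2,0)^t$, $D=(1,0)$. Then $DU=(2)$ is nonsingular and $tI_1+DU=(t+2)$, so your lemma predicts that $tI_2+UD=\diag(t+2,\,t)$ has Smith form $\diag(1,\,t(t+2))$. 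But the ideal of $1\times1$ minors of $\diag(t+2,t)$ is $(t+2,t)=(t,2)$, which is not principal in $\Z[t]$; hence this matrix has no Smith normal form over $\Z[t]$ at all. The Schur-complement block you propose does reduce unimodularly to $\diag(tI_p+UD,\,I_q)$, but the reduction to $\diag(I_p,\,tI_q+DU)$ from the \emph{same} block matrix requires clearing against the $tI$ block and hence inverting $t$; over $\Z[t]$ the two reductions live on different block matrices and are not directly comparable.

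Your spectral analysis and the inductive shift $tI+D_nU_{n-1}=(t+1)I+U_{n-2}D_{n-1}$ are correct and do form the skeleton of the actual proof. What is missing is an additional integrality input specific to Young's lattice --- for instance, that $U_{n-1}$ has torsion-free cokernel over $\Z$ (equivalently, admits an integer left inverse) --- which is precisely what fails in the counterexample above and what prevents stray integer content from entering the determinantal ideals. Until you isolate and prove such a hypothesis for the up map, the induction does not close; the identity $D^jU^j=j!\cdot I$ on $P_k$ and the cokernel heuristic you sketch are suggestive but do not by themselves supply it.
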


The following proposition gives a character formula expressing the rows of $M$ associated to the reflection representation of $\mf{S}_n$ (Ballantine and Orellana use the equivalent language of Schur functions and Kronecker products). This will allow us to relate $\tilde{C}$ to $UD$ in this case.

\begin{prop}\cite[proof of Proposition 4.1]{BO}
\label{kronecker}
Let $n$ be a positive integer and $\lambda \vdash n$ then
\begin{equation}\label{UD-I}
\chi_{(n-1,1)} \cdot \chi_{\lambda} = C(\lambda)\chi_{\lambda} + \sum \chi_{\mu}
\end{equation}
where $C(\lambda)=|\{i|\lambda_i > \lambda_{i+1}, 1 \leq i \leq l(\lambda)-1\}|$ and the sum is over all partitions different from $\lambda$ that can be obtained by removing and then adding a corner cell to $\lambda$.
\end{prop}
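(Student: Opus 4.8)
The plan is to recognize $\chi_{(n-1,1)}$ as a summand of the natural permutation character and to reduce \eqref{UD-I} to the classical branching rule for the pair $\mf{S}_{n-1} \subset \mf{S}_n$. Let $\rho$ be the permutation representation of $\mf{S}_n$ on $\C^n$, so that $\chi_\rho = \chi_{(n)} + \chi_{(n-1,1)}$. Since $\chi_{(n)}$ is the trivial character, $\chi_{(n)} \cdot \chi_\lambda = \chi_\lambda$, and it therefore suffices to show
\[
\chi_\rho \cdot \chi_\lambda = \bigl(C(\lambda)+1\bigr)\chi_\lambda + \sum_{\mu} \chi_\mu,
\]
where the sum runs over all $\mu \neq \lambda$ obtainable from $\lambda$ by removing and then adding a corner cell; subtracting $\chi_\lambda$ from both sides then yields \eqref{UD-I}.

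First I would use the standard identification $\rho \cong \mathrm{Ind}_{\mf{S}_{n-1}}^{\mf{S}_n}(\mathbf{1})$ together with the projection (tensor) identity $\mathrm{Ind}_{H}^{G}(\mathbf{1}) \otimes V \cong \mathrm{Ind}_{H}^{G}\bigl(\mathrm{Res}_{H}^{G} V\bigr)$, applied to the irreducible $V$ affording $\chi_\lambda$, to obtain
\[
\chi_\rho \cdot \chi_\lambda = \mathrm{Ind}_{\mf{S}_{n-1}}^{\mf{S}_n}\bigl(\mathrm{Res}_{\mf{S}_{n-1}}^{\mf{S}_n}\chi_\lambda\bigr).
\]
The branching rule then expands $\mathrm{Res}_{\mf{S}_{n-1}}^{\mf{S}_n}\chi_\lambda = \sum_{\nu}\chi_\nu$ over all $\nu \vdash n-1$ obtained from $\lambda$ by deleting a removable corner, and $\mathrm{Ind}_{\mf{S}_{n-1}}^{\mf{S}_n}\chi_\nu = \sum_{\mu}\chi_\mu$ over all $\mu \vdash n$ obtained from $\nu$ by adding an addable corner. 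Composing, $\chi_\rho \cdot \chi_\lambda = \sum_{\nu}\sum_{\mu}\chi_\mu$, where $\nu$ ranges over the corner-deletions of $\lambda$ and, for each $\nu$, $\mu$ ranges over the corner-additions of $\nu$.

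What remains, and the only step requiring genuine care, is to tabulate the multiplicities in this double sum. Regarding partitions as Young diagrams (sets of cells): for every corner-deletion $\nu$ of $\lambda$ the single cell of $\lambda \setminus \nu$ is an addable corner of $\nu$ whose addition recovers $\lambda$, so $\chi_\lambda$ occurs once for each removable corner of $\lambda$; the removable corners are the rows $i$ with $\lambda_i > \lambda_{i+1}$ (with the convention $\lambda_{l(\lambda)+1} = 0$), and since the last row always qualifies their number is exactly $C(\lambda)+1$. For $\mu \neq \lambda$: if $\mu$ occurs via an intermediate $\nu$, then $\nu \subseteq \lambda$ and $\nu \subseteq \mu$ with $|\nu| = n-1$, which forces $\nu = \lambda \cap \mu$; hence such a $\mu$ occurs with multiplicity exactly $1$, and it occurs at all precisely when it is reachable from $\lambda$ by removing and then adding a corner cell. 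This establishes the displayed identity, and hence \eqref{UD-I}. I expect the uniqueness of the intermediate $\nu$ when $\mu \neq \lambda$ to be the main (though elementary) point; everything else is a direct application of standard $\mf{S}_n$ representation theory. Finally, I note for the sequel that \eqref{UD-I} identifies the operator $\chi_\lambda \mapsto \chi_{(n-1,1)} \cdot \chi_\lambda$ on the $\Z$-span of the irreducible characters of $\mf{S}_n$ with $U_{n-1}D_n - I$ on $\Z Y_n$ (under $\chi_\lambda \leftrightarrow \lambda$), which is the bridge from Proposition \ref{kronecker} to Theorem \ref{stanley}.
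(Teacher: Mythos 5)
Your proof is correct: the decomposition $\chi_\rho=\chi_{(n)}+\chi_{(n-1,1)}$, the projection formula, the two branching rules, and the multiplicity count (including the observation that $\nu=\lambda\cap\mu$ is forced when $\mu\neq\lambda$, and that $\lambda$ has exactly $C(\lambda)+1$ removable corners) together give exactly the stated identity. The paper does not prove this proposition itself but cites Ballantine--Orellana, whose argument is the symmetric-function translation of the same idea (writing $s_{(n-1,1)}=h_{n-1}h_1-h_n$ and using the standard Kronecker-product identity for $h_{n-1}h_1$, which encodes induction of restriction), so your route is essentially the standard one.
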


We can now prove Theorem \ref{reflection rep}.

\begin{thm5}
Let $\gamma$ be the reflection representation of $\mf{S}_n$ and let $p(j)$ denote the number of partitions of the integer $j$. Then
\[
K(\gamma) \cong \bigoplus_{i=2}^{p(n)-p(n-1)} \Z/q_i \Z
\]
where
\[
q_i=\prod_{\substack{1 \leq j \leq n \\ p(j)-p(j-1)\geq i}}j
\]
\end{thm5}
\begin{proof}
The right hand side of Equation (\ref{UD-I}) can easily be seen to be the sum of the irreducible representations indexed by the partitions appearing in $(UD-I)\lambda$ since $|C(\lambda)|$ is one less than the number of corner cells in $\lambda$ (the set $C(\lambda)$ excludes the last corner).  Therefore $M=UD-I$ and $\tilde{C}=(n-1)I-M=nI-UD$.  Thus the formula in Theorem \ref{stanley} gives the Smith normal form over $\Z[t]$ of $(n+t)I-\tilde{C}$.  Replacing $t$ by $t-n$ shows that the Smith normal form of $tI- \tilde{C}$ is $\diag(\beta_{p(n)},...,\beta_1)$ with 
\[
\beta_i(t)=\prod_{\substack{0 \leq k \leq n \\ p(n-k)-p(n-k-1) \geq i}} (t-(n-k))
\]
Finally, reindexing with $j=n-k$ and setting $t = 0$ gives the desired result.
\end{proof}
\begin{remark}
The existence of Smith normal forms over $\Z[t]$ for the operators $UD+tI$ in the differential posets $Y^r$ for $r>1$ was recently proven by Nie \cite{Nie} (see Shah \cite{Shah} for another proof), generalizing Theorem \ref{stanley}.  Of particular interest in the context of critical groups is the case $r=2$, since the elements of $Y^2$ correspond to irreducible representations of the Weyl group of type $B_n$, just as $Y$ indexes the irreducibles of $\mf{S}_n$.  One might have hoped that $UD-cI$ would again have encoded $\gamma \otimes (\text{--})$, as in Proposition \ref{kronecker}, for some irreducible representation $\gamma$, thus allowing us to compute the critical group corresponding to this representation.  Some computations for small values of $n$, however, show that this is not the case.
\end{remark}

\section*{Acknowledgements}
This research was originally conducted for my undergraduate honors thesis at the University of Minnesota.  I am very thankful to Vic Reiner, my thesis advisor, for introducing me to the topic of critical groups and for always knowing the right references to check and the right ideas to try, as well as for his careful reading of earlier drafts of this paper.  I also thank Gregg Musiker and Pavlo Pylyavskyy for agreeing to be readers for my thesis.

\end{document}